\newcommand{\bracket}[1]{\ensuremath{\left[ #1 \right]}}
\newcommand{\braces}[1]{\ensuremath{\left\{ #1 \right\}}}
\newcommand{\parenth}[1]{\ensuremath{\left( #1 \right)}}
\newcommand{\refeqn}[1]{(\ref{eqn:#1})}
\newcommand{\reffig}[1]{Fig. \ref{fig:#1}}
\newcommand{\tr}[1]{\mathrm{tr}\ensuremath{\negthickspace\bracket{#1}}}
\newcommand{\trs}[1]{\mathrm{tr}\ensuremath{[#1]}}
\newcommand{\SO}{\ensuremath{\mathsf{SO(3)}}}
\newcommand{\T}{\ensuremath{\mathsf{T}}}
\renewcommand{\L}{\ensuremath{\mathsf{L}}}
\newcommand{\so}{\ensuremath{\mathfrak{so}(3)}}
\renewcommand{\Re}{\ensuremath{\mathbb{R}}}
\newcommand{\D}{\ensuremath{\mathbf{D}}}
\newcommand{\Sph}{\ensuremath{\mathsf{S}}}
\newcommand{\Q}{\ensuremath{\mathsf{Q}}}
\newcommand{\refprop}[1]{Proposition \ref{prop:#1}}
\title{\LARGE \bf
Geometric Tracking Control of\\ the Attitude Dynamics of a Rigid Body on \SO}
\author{Taeyoung Lee\authorrefmark{1}%
\thanks{Taeyoung Lee, Mechanical and Aerospace Engineering, Florida Institute of Technology, Melbourne, FL 39201 {\tt taeyoung@fit.edu}}%
\thanks{\textsuperscript{\footnotesize\ensuremath{*}}This research has been supported in part by NSF under grants CMMI-1029551.}
}
\newtheorem{prop}{Proposition}
\begin{document}
\allowdisplaybreaks
\maketitle \thispagestyle{empty} \pagestyle{empty}

\begin{abstract}
This paper provides new results for a tracking control of the attitude dynamics of a rigid body. Both of the attitude dynamics and the proposed control system are globally expressed on the special orthogonal group, to avoid complexities and ambiguities associated with other attitude representations such as Euler angles or quaternions. By selecting an attitude error function carefully, we show that the proposed control system guarantees a desirable tracking performance uniformly for nontrivial rotational maneuvers involving a large initial attitude error. In a special case where the desired attitude command is fixed, we also show that the attitude dynamics can be stabilized without the knowledge of an inertia matrix. These are illustrated by numerical examples.
\end{abstract}

\section{Introduction}

The attitude dynamics of a rigid body appears in various engineering applications, such as aerial and underwater vehicles, robotics, and spacecraft, and the attitude control problem has been extensively studied under various assumptions (see, for example, \cite{WieWeiJGCD89,WenKreITAC91,Sid97,Hug86}).

One of the distinct features of the attitude dynamics is that its configuration manifold is not linear: it evolves on a nonlinear manifold, referred as the special orthogonal group, $\SO$. This yields important and unique properties that cannot be observed from dynamic systems evolving on a linear space. For example, it has been shown that there exists no continuous feedback control system that asymptotically stabilizes an attitude globally on $\SO$~\cite{CroITAC84}. 

However, most of the prior work on the attitude control is based on minimal representations of an attitude, or quaternions. It is well known that any minimal attitude representations are defined only locally, and they exhibit kinematic singularities for large angle rotational maneuvers. Quaternions do not have singularities, but they have ambiguities in representing an attitude, as the three-sphere $\Sph^3$ double covers $\SO$. As a result, in a quaternion-based attitude control system, convergence to a single attitude implies convergence to either of the two disconnected, antipodal points on $\Sph^3$~\cite{MaySanPICDC09}. Therefore, a quaternion-based control system becomes discontinuous when applied to an actual attitude dynamics, and it may also exhibit unwinding behavior, where the controller unnecessarily rotates a rigid body through large angles~\cite{BhaBerSCL00}.

Geometric control is concerned with the development of control systems for dynamic systems evolving on nonlinear manifolds that cannot be globally identified with Euclidean spaces~\cite{Jur97,Blo03,BulLew05}. By characterizing geometric properties of nonlinear manifolds intrinsically, geometric control techniques completely avoids singularities and ambiguities that are associated with local coordinates or improper characterizations of a configuration manifold. This approach has been applied to fully actuated rigid body dynamics on Lie groups to achieve almost global asymptotic stability~\cite{BulLew05,MaiBerITAC06,CabCunPICDC08,ChaMcCITAC09,LeeLeoPICDC10}.

In this paper, we develop a geometric controller on $\SO$ to track an attitude and angular velocity command. The geometric attitude controllers studied in~\cite{BulLew05,ChaMcCITAC09,LeeLeoPICDC10} are not desirable in the sense that the magnitude of their control input converges to zero when the initial attitude error is maximized, i.e. the Eigen-axis rotation angle between the initial attitude and the initial command approaches $180^\circ$. This reduces the initial convergence rate significantly, and it destroys the unique advantage of geometric control approaches, namely effectiveness for large angle rotational maneuvers. 

The geometric tracking controller developed in this paper avoids this drawback by proposing a new configuration error function on $\SO$, and it exhibits a good tracking performance uniformly in large initial attitude errors. We also show that when the attitude tracking command is fixed, i.e. a stabilization problem, we can achieve exponential stability without the knowledge of an inertia matrix. For both cases, the region of attraction almost covers $\SO$, and the initial angular velocity error can be arbitrarily large, provided that a controller gain is sufficiently large. 

This paper is organized as follows. We present a global attitude dynamics model in Section \ref{sec:AD}. A new configuration error function and geometric control systems on $\SO$ are developed in Section III, followed by numerical results.

\section{Attitude Dynamics of a Rigid Body}\label{sec:AD}

We consider the rotational attitude dynamics of a fully-actuated rigid body. We define an inertial reference frame and a body fixed frame whose origin is located at the mass center of the rigid body. The configuration of the rigid body is the orientation of the body fixed frame with respect to the inertial frame, and it is represented by a rotation matrix $R\in\SO$, where the special orthogonal group $\SO$ is the group of $3\times 3$ orthogonal matrices with determinant of one, i.e., $\SO=\{R\in\Re^{3\times 3}\,|\,R^T R=I,\,\det{R}=1\}$.

The equations of motion are given by
\begin{gather}
J\dot \Omega + \Omega\times J\Omega = u,\label{eqn:Wdot}\\
\dot R = R\hat\Omega,\label{eqn:Rdot}
\end{gather}
where $J\in\Re^{3\times 3}$ is the inertia matrix in the body fixed frame, and $\Omega\in\Re^3$ and $u\in\Re^3$ are the angular velocity of the rigid body and the control moment, represented with respect to the body fixed frame, respectively.

The \textit{hat} map $\wedge :\Re^{3}\rightarrow\so$ transforms a vector in $\Re^3$ to a $3\times 3$ skew-symmetric matrix such that $\hat x y = x\times y$ for any $x,y\in\Re^3$. The inverse of the hat map is denoted by the \textit{vee} map $\vee:\so\rightarrow\Re^3$.
Several properties of the hat map are summarized as follows.
\begin{gather}
    \hat x y = x\times y = - y\times x = - \hat y x,\\
    \tr{A\hat x }=\frac{1}{2}\tr{\hat x (A-A^T)}=-x^T (A-A^T)^\vee,\label{eqn:hat1}\\
    \hat x  A+A^T\hat x=(\braces{\tr{A}I_{3\times 3}-A}x)^{\wedge},\label{eqn:xAAx}\\
R\hat x R^T = (Rx)^\wedge,\label{eqn:RxR}
\end{gather}
for any $x,y\in\Re^3$, $A\in\Re^{3\times 3}$, and $R\in\SO$.

\section{Geometric Tracking Control on $\SO$}

We develop a control system to follow a given smooth desired attitude command $R_d(t)\in\SO$. The kinematics equation for the attitude command can be written as
\begin{align}
\dot R_d = R_d \hat\Omega_d,\label{eqn:Rddot}
\end{align}
where $\Omega_d\in\Re^3$ is the desired angular velocity. 

\subsection{Attitude Error Function}

One of the important steps in constructing a control system on a nonlinear manifold $\Q$ is choosing a proper configuration error function, which is a smooth positive definite function $\Psi:\Q\times\Q\rightarrow\Re$ that measures the error between a current configuration and a desired configuration. Once a configuration error function is chosen, a configuration error vector, and a velocity error vector can be defined in the tangent $\T_q\Q$ by using the derivatives of $\Psi$~\cite{BulLew05}. Then, the remaining procedure is similar to nonlinear control system design in Euclidean spaces: control inputs are carefully designed as a function of these error vectors through a Lyapunov analysis on $\Q$, where a Lyapunov candidate also is written in terms of $\Psi$. Therefore, a configuration error function is critical in the design and analysis of a control systems on a manifold, and the corresponding performance and effectiveness of a control system directly depend on the choice of a configuration error function. 

But, the importance of a configuration error function has not been extensively studied in geometric controls, and  it is sometimes chosen without a careful consideration. Almost globally stabilizing controllers on $\SO$ have been studied in~\cite{BulLew05,ChaMcCITAC09}, where essentially, the following configuration error function is used to stabilize the attitude represented by the identity matrix:
\begin{align}
\Psi^\circ(I,R)=\frac{1}{2}\tr{I-R}.\label{eqn:Psi0}
\end{align}
This error function yields the following form of the configuration error vector $e_R^\circ=(R-R^T)^\vee\in\Re^3$ and the velocity error vector $e^\circ_\Omega = \Omega\in\Re^3$. A simple PD-type controller, i.e. $u^\circ=-k_R e^\circ_R - k_\Omega e^\circ_\Omega$ for positive constants $k_R,k_\Omega$, stabilizes the identity matrix $I$. This can be slightly generalized to achieve almost global stability.

However, this choice of a configuration error function is not desirable, since the magnitude of the corresponding configuration error vector $e^\circ_R$ is not proportional to the rotation angle about the Euler axis between the current attitude and the identity matrix: as the current attitude approaches to the opposite of the identity, i.e. $180^\circ$ rotation to $I$, the magnitude of the attitude error vector $\|e_R\|$ approaches zero. Therefore, the performance of this controller becomes worse as the initial attitude error becomes larger. This is not particularly desirable, since it destroys one of the distinct advantages of geometric controls of a rigid body, namely effectiveness for large angle rotational motions. 

In this paper, we introduce a new form of the configuration error function to avoid this drawback, and to improve tracking performances particularly for larger initial attitude errors. 

\begin{prop}\label{prop:1}
For a given tracking command $(R_d,\Omega_d)$, and current attitude and angular velocity $(R,\Omega)$, we define an attitude error function $\Psi:\SO\times\SO\rightarrow\Re$, an attitude error vector $e_R:\SO\times\SO\rightarrow\in\Re^3$, and an angular velocity error vector $e_\Omega:\SO\times\Re^3\times\SO\times\Re^3\rightarrow \Re^3$ as follows:
\begin{gather}
\Psi (R,R_d) = 2-\sqrt{1+\trs{R_d^T R}},\label{eqn:Psi}\\
e_R(R,R_d) =\frac{1}{2\sqrt{1+\tr{R_d^T R}}} (R_d^T R-R^TR_d)^\vee,\label{eqn:eR}\\
e_\Omega(R,\Omega,R_d,\Omega) = \Omega - R^T R_d\Omega_d.\label{eqn:eW}
\end{gather}
For a fixed $R_d$, the attitude error function $\Psi$ can be considered as a function of $R$ only. The attitude error vector $e_R$ is well defined in the sublevel set $L_2=\{R\in\SO\,|\, \Psi(R,R_d)< 2\}$. 

Then, the following statements hold:
\begin{itemize}
\item[(i)] $\Psi$ is positive definite about $R=R_d$.
\item[(ii)] in $L_2$, the left-trivialized derivative of $\Psi$ is given by
\begin{align}
\T^*_I \L_R\, (\D_R\Psi(R,R_d))= e_R.
\end{align}
\item[(iii)] the critical points of $\Psi$ are $\{R_d\}\cup\{R_d\exp (\pm\pi \hat s)\}$ for any $s\in\Sph^2$, and there exists only one critical point $\{R_d\}$ in $L_2$.
\item[(iv)] $\Psi$ is locally quadratic in $L_2$, since
\begin{align}
\|e_R\|^2 \leq \Psi(R,R_d) \leq 2\|e_R\|^2.\label{eqn:eRPsi}
\end{align}
\end{itemize}
\end{prop}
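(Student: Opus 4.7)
My plan is to dispatch (i)--(iii) by direct trace computations and save (iv) for last, since it is the only step requiring a careful bound.

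For (i), I would invoke the standard fact that $\tr{R_d^T R}\in[-1,3]$ for rotations, with the maximum attained precisely at $R_d^T R=I$. This immediately yields $\Psi(R,R_d)\in[0,2]$ with $\Psi=0$ iff $R=R_d$, which is exactly positive definiteness about $R=R_d$, and moreover shows that $1+\tr{R_d^T R}>0$ throughout $L_2$ so that $e_R$ is well defined there.

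For (ii), I would differentiate $\Psi$ along a curve with $\dot R=R\hat\eta$ for arbitrary $\eta\in\Re^3$. The chain rule contributes a factor $-\frac{1}{2\sqrt{1+\tr{R_d^T R}}}$, and the trace term simplifies via \refeqn{hat1} with $A=R_d^T R$ to $\tr{R_d^T R\hat\eta}=-\eta^T(R_d^T R-R^T R_d)^\vee$. Combining these two pieces produces $\D_R\Psi\cdot R\hat\eta=\eta^T e_R$, which is exactly the stated left-trivialized derivative.

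For (iii), the critical point condition $e_R=0$ is equivalent to $R_d^T R-R^T R_d=0$, i.e., $R_d^T R$ is symmetric. A rotation matrix is symmetric iff it is the identity or a rotation by $\pi$ about some axis in $\Sph^2$, giving the advertised critical set $\{R_d\}\cup\{R_d\exp(\pm\pi\hat s)\}$. The latter points satisfy $\tr{R_d^T R}=-1$ and hence $\Psi=2$, so they sit on the boundary of $L_2$, leaving $R_d$ as the unique critical point in $L_2$.

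The main substance is in (iv), and this is where I expect the work to lie. I would parameterize $R_d^T R=\exp(\theta\hat s)$ by axis--angle with $s\in\Sph^2$ and $\theta\in[0,\pi)$ on $L_2$. Then $\tr{R_d^T R}=1+2\cos\theta$ and $(R_d^T R-R^T R_d)^\vee=2\sin\theta\,s$, and a short half-angle reduction yields the scalar expressions $\|e_R\|^2=\sin^2(\theta/2)$ and $\Psi=2-2\cos(\theta/2)=4\sin^2(\theta/4)$. Both inequalities in \refeqn{eRPsi} then become, through $\sin(\theta/2)=2\sin(\theta/4)\cos(\theta/4)$, the elementary bounds $\cos^2(\theta/4)\le 1$ and $\cos^2(\theta/4)\ge \tfrac{1}{2}$. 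The second of these is the delicate one: it becomes sharp exactly at $\theta=\pi$, which is precisely the boundary of $L_2$ excluded by (iii), so the quadratic sandwich holds throughout the sublevel set as claimed.
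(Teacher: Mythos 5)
Your proposal is correct and follows essentially the same route as the paper: trace bounds for (i), differentiation along $\delta R = R\hat\eta$ with the hat-map identity for (ii), the symmetric-rotation/boundary argument for (iii), and the axis--angle reduction to $\Psi = 4\sin^2(\theta/4)$ and $\|e_R\|^2 = 4\sin^2(\theta/4)\cos^2(\theta/4)$ for (iv). Your only addition is to make explicit the elementary bounds $\tfrac{1}{2}\le\cos^2(\theta/4)\le 1$ on $[0,\pi]$, which the paper leaves implicit after the same computation.
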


\begin{proof}
For any rotation matrix $Q=R_d^T R\in\SO$, its trace is bounded by $-1\leq\trs Q\leq 3$, and $\trs{Q}=3$ if and only if $Q=I$~\cite{ShuJAS93}. Substituting this into \refeqn{Psi}, it follows that $\Psi\geq 0$, and $\Psi=0$ if and only if $R=R_d$. This shows (i).

The infinitesimal variation of a rotation matrix can be written as
\begin{align*}
\delta R = \frac{d}{d\epsilon}\bigg|_{\epsilon=0} R \exp \epsilon\hat\eta = R\hat\eta
\end{align*}
for $\eta\in\Re^3$. Using this, the derivative of this error function with respect to $R$ is given by
\begin{align*}
\D_R \Psi(R,R_d)\cdot \delta R & = \frac{d}{d\epsilon}\bigg|_{\epsilon=0} \Psi(R\exp\epsilon\hat\eta,R_d)\\
& = -\frac{1}{2\sqrt{1+\trs{R_d^T R}}}\tr{R_d^T R\hat\eta}.
\end{align*}
This is well defined in $L_2$, since $\trs{R_d^TR} > -1$ in $L_2$. Using a property of the hat map given by \refeqn{hat1}, this can be written as
\begin{align*}
\D_R \Psi(R,R_d)\cdot R\hat\eta & = \frac{1}{2\sqrt{1+\trs{R_d^T R}}} (R_d^T R-R^TR_d)^\vee \cdot \eta\\
& = e_R\cdot \eta,
\end{align*}
which shows (ii). 

The critical points of $\Psi$ are the solutions $R\in\SO$ to the equation $R_d^TR-R^TR_d=0$ or $\trs{R_d^T R}=-1$, which are given by $R_d^TR=I$ or $R_d^TR=\exp(\pm\pi\hat s)$ for any $s\in\Sph^2$~\cite{BulLew05}. This shows the first part of (iii). From Rodrigues' formula, for any $Q=R_d^T R\in\SO$, there exists $x\in\Re^3$ with $\|x\|\leq \pi$ such that
\begin{align}
Q=\exp \hat x = I + \frac{\sin\|x\|}{\|x\|}\hat x+\frac{1-\cos\|x\|}{\|x\|^2}\hat x^2.\label{eqn:Q}
\end{align}
Substituting this into \refeqn{Psi}, we obtain
\begin{align*}
\Psi(R_d\exp\hat x, R_d) = 4\sin^2\frac{\|x\|}{4}.
\end{align*}
At the critical points $R=R_d\exp(\pm\pi\hat s)$ with $s\in\Sph^2$, the value of $\Psi$ becomes 2. This shows the second part of (iv). Substituting \refeqn{Q} into \refeqn{eR}, we obtain
\begin{align*}
\|e_R\|^2 = \sin^2 \frac{\|x\|}{2}=4\sin^2\frac{\|x\|}{4}\cos^2\frac{\|x\|}{4}.
\end{align*}
This shows (iv).
\end{proof}

The proposed attitude error function is more desirable than \refeqn{Psi0} in the sense that the magnitude of the attitude error vector $e_R$ is proportional to the rotation about the Euler axis between $R$ and $R_d$ (see \reffig{Psi}). This improves the tracking performance, especially for large angle rotational maneuvers with a large initial attitude error.

\begin{figure}[b]
\centerline{
\subfigure[Attitude error function]{
\renewcommand{\xyWARMinclude}[1]{\includegraphics[width=0.48\columnwidth]{#1}}
{\footnotesize\selectfont
$$\begin{xy}
\xyWARMprocessEPS{Psi}{pdf}
\xyMarkedImport{}
\xyMarkedMathPoints{1-15}
\end{xy}\vspace*{-0.35cm}
$$}}
\subfigure[Magnitude of attitude error vector]{
{\footnotesize\selectfont
\renewcommand{\xyWARMinclude}[1]{\includegraphics[width=0.48\columnwidth]{#1}}
$$\begin{xy}
\xyWARMprocessEPS{neR}{pdf}
\xyMarkedImport{}
\xyMarkedMathPoints{1-15}
\end{xy}\vspace*{-0.35cm}
$$}}}
\caption{Attitude error function $\Psi$ and the magnitude of the attitude error vector $\|e_R\|$ when $R_d^TR=\exp \hat x$, for $x/\|x\|=[1,0,0]$ and $\|x\|\in[0,\pi]$. For the attitude error function $\Psi^\circ$ used in other literatures (blue, dashed), $\|e_R^\circ\|$ is maximized when $\|x\|=\pi/2$, and it approaches $0$ as $\|x\|\rightarrow\pi$. This reduces the convergence rate of the corresponding control system significantly, when the initial attitude error approaches $180^\circ$. But, in the proposed attitude error function $\Psi$ (red), the magnitude of the attitude error vector $\|e_R\|$ is proportional to the rotation angle $\|x\|$ about the Euler axis between $R$ and $R_d$. This guarantees a good convergence rate uniformly in initial attitude errors.}\label{fig:Psi}
\end{figure}

\subsection{Attitude Error Dynamics}

We find the attitude error dynamics for the proposed attitude error function $\Psi$, the attitude error vector $e_R$, and the angular velocity error $e_\Omega$.
\begin{prop}\label{prop:2}
The error dynamics for $\Psi$, $e_R$, $e_\Omega$ satisfies
\begin{gather}
\frac{d}{dt}(\Psi(R,R_d))  = e_R\cdot e_\Omega,\label{eqn:Psidot}\\
\|\dot e_R\|  \leq \frac{1}{2} \|e_\Omega\|,\label{eqn:eRdot}\\
\dot e_\Omega  = J^{-1}(-\Omega\times J\Omega + u)+\hat\Omega R^T R_d\Omega_d- R^T R_d{\dot \Omega}_d.\label{eqn:eWdot}
\end{gather}
\end{prop}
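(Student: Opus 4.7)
The plan centers on the relative rotation $Q = R_d^T R \in \SO$. A direct computation with $\dot R = R\hat\Omega$, $\dot R_d = R_d\hat\Omega_d$, and the conjugation identity $\hat y\, R = R(R^T y)^\wedge$ (a rewriting of \refeqn{RxR}) should yield the compact form $\dot Q = Q\hat e_\Omega$, since by construction $e_\Omega = \Omega - Q^T \Omega_d$. This identity drives everything that follows.

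For \refeqn{Psidot}, I would differentiate $\Psi = 2 - \sqrt{1 + \tr{Q}}$ directly. With $\dot Q = Q\hat e_\Omega$ the derivative of the trace is $\tr{Q\hat e_\Omega}$, and \refeqn{hat1} with $A = Q$ rewrites it as $-e_\Omega^T(Q - Q^T)^\vee$. Dividing by $-2\sqrt{1+\tr{Q}}$ and recognising $e_R$ from \refeqn{eR} gives $\dot\Psi = e_R \cdot e_\Omega$.

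The real work is \refeqn{eRdot}. Differentiating $e_R = \frac{1}{2f}(Q - Q^T)^\vee$ with $f = \sqrt{1+\tr Q}$, the numerator satisfies $\dot Q - \dot Q^T = Q\hat e_\Omega + \hat e_\Omega Q^T$, which by \refeqn{xAAx} with $A = Q^T$ equals $(\{\tr{Q}\, I - Q^T\}e_\Omega)^\wedge$; applying vee gives $\frac{d}{dt}(Q-Q^T)^\vee = (\tr{Q}\, I - Q^T) e_\Omega$. Since $\dot\Psi = -\dot f$, \refeqn{Psidot} also yields $\dot f = -e_R\cdot e_\Omega$. Collecting, I expect
\begin{align*}
\dot e_R = \frac{1}{2f}\bracket{2 e_R e_R^T + \tr{Q}\, I - Q^T}\, e_\Omega.
\end{align*}
The inequality will then follow by showing the bracketed matrix has operator norm at most $f$. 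I would establish this by decomposing any vector along the Euler axis $s$ of $Q = \exp(\theta\hat s)$ and its orthogonal complement. Since $Qs = s$, the splitting block-diagonalises the matrix: on the parallel component it scales by $1+\cos\theta = f^2/2$, and a short Rodrigues calculation on the perpendicular plane produces a block of norm exactly $f$. Because $f = 2\cos(\theta/2) \leq 2$, both factors are bounded by $f$, so $\|\dot e_R\| \leq \tfrac{1}{2}\|e_\Omega\|$ follows. This axis/perpendicular decomposition is the main obstacle; everything else is algebraic.

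The remaining identity \refeqn{eWdot} should be routine: differentiate $e_\Omega = \Omega - R^T R_d\Omega_d$, substitute \refeqn{Wdot} for $\dot\Omega$ and \refeqn{Rdot}, \refeqn{Rddot} for $\dot R$ and $\dot R_d$, and note that $\hat\Omega_d\Omega_d = \Omega_d\times\Omega_d = 0$ cancels one term.
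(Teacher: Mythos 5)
Your proposal is correct, and for the one step that carries real content it takes a genuinely different route from the paper. The overall skeleton agrees: both proofs reduce everything to the relative rotation $Q=R_d^TR$ (your identity $\dot Q=Q\hat e_\Omega$ is exactly what the paper computes term by term as $-\hat\Omega_dR_d^TR+R_d^TR\hat\Omega$ followed by \refeqn{RxR}), both arrive at the same expression $\dot e_R=\frac{1}{2f}\bigl(2e_Re_R^T+\trs{Q}\,I-Q^T\bigr)e_\Omega$ with $f=\sqrt{1+\trs{Q}}$, and \refeqn{Psidot} and \refeqn{eWdot} are identical routine computations in both. The difference is how the operator norm of the matrix $E(R,R_d)$ in \refeqn{eRdotE} is bounded: the paper computes the eigenvalues of $E^TE$ symbolically in Matlab, obtaining $\frac14,\frac14,\frac18(1+\cos\|x\|)$, and reads off $\|E\|=\frac12$; you instead decompose along the Euler axis $s$ of $Q=\exp(\theta\hat s)$ and its orthogonal complement, using that $e_R$ is parallel to $s$ so the rank-one term vanishes on $s^\perp$. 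Your two blocks check out: on the axis the matrix scales by $1+\cos\theta=f\cos(\theta/2)\le f$, and on $s^\perp$ the block $(1+\cos\theta)I_2\mp\sin\theta\,J$ is a scaled rotation of norm $\sqrt{(1+\cos\theta)^2+\sin^2\theta}=f$, which reproduces the paper's eigenvalues exactly ($f^2/(2f)^2=\frac14$ and $(f^2/2)^2/(2f)^2=\frac18(1+\cos\theta)$). What your approach buys is a self-contained analytic proof of \refeqn{eRdot} in place of an appeal to symbolic computation; what it costs is the extra geometric bookkeeping of the axis/perpendicular splitting. The only caveat worth stating explicitly is that the whole computation lives in the sublevel set $L_2$ where $\trs{Q}>-1$, so $f>0$ and $\theta<\pi$; this is the same restriction under which $e_R$ is defined in \refprop{1}, so it is not a gap.
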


\begin{proof}
Using the attitude kinematics equations \refeqn{Rdot}, \refeqn{Rddot}, the time derivative of the attitude error function is given by
\begin{align*}
\frac{d}{dt} \Psi &(R,R_d)  = -\frac{1}{2\sqrt{1+\trs{R_d^T R}}}\tr{R_d^T R\hat\Omega - \hat\Omega_d R_d^T R}\\
& = -\frac{1}{2\sqrt{1+\trs{R_d^T R}}}\tr{R_d^T R(\hat\Omega - R^T R_d\hat\Omega_d R_d^T R)},
\end{align*}
where we use a property of the hat map \refeqn{RxR}. Substituting \refeqn{eW} into this, and using \refeqn{hat1}, \refeqn{eR}, we obtain 
\begin{align*}
\frac{d}{dt} \Psi (R,R_d) &  = -\frac{1}{2\sqrt{1+\trs{R_d^T R}}}\tr{R_d^T R\hat e_\Omega}\\
& = \frac{1}{2\sqrt{1+\trs{R_d^T R}}}(R_d^T R-R^TR_d)^\vee \cdot e_\Omega,
\end{align*}
which shows \refeqn{Psidot}. Next, the time derivative of the attitude error vector is given by
\begin{align*}
\dot e_R
& = -\frac{\tr{-\hat\Omega_dR_d^T R+R_d^TR\hat\Omega}}{2(1+\tr{R_d^T R})} e_R+ \frac{1}{2\sqrt{1+\tr{R_d^T R}}}\\
&\quad\times (-\hat\Omega_dR_d^T R+R_d^T R\hat\Omega+\hat\Omega R^TR_d-R^T R_d\hat\Omega_d)^\vee.
\end{align*}
Using \refeqn{RxR}, \refeqn{eW}, this can be written in terms of $e_\Omega$ as
\begin{align*}
& \dot e_R  = -\frac{\tr{R_d^TR(\hat\Omega-R^T R_d\hat\Omega_dR_d^TR)}}{2(1+\tr{R_d^T R})} e_R+ \frac{1}{2\sqrt{1+\tr{R_d^T R}}}\\
&\times ( R_d^T R(\hat\Omega -R^T R_d\hat\Omega_dR_d^TR )
+(\hat\Omega -R^T R_d\hat\Omega_d R_d^T R)R^TR_d)^\vee\\
&\quad = -\frac{\tr{R_d^TR \hat e_\Omega}}{2(1+\tr{R_d^T R})} e_R+ \frac{1}{2\sqrt{1+\tr{R_d^T R}}}\\
&\quad\quad\times ( R_d^T R \hat e_\Omega
+\hat e_\Omega R^TR_d)^\vee.
\end{align*}
Using the properties of the hat map, given by \refeqn{hat1}, \refeqn{xAAx}, this can be further reduced to
\begin{align}
\dot e_R & = \frac{e_R\cdot e_\Omega}{\sqrt{1+\tr{R_d^T R}}} e_R+ \frac{1}{2\sqrt{1+\tr{R_d^T R}}}\nonumber\\
&\quad\times (\trs{R^T R_d}I -R^T R_d)e_\Omega\nonumber\\
& = \frac{1}{2\sqrt{1+\tr{R_d^T R}}} (\trs{R^T R_d}I -R^T R_d+2 e_R e_R^T)e_\Omega\nonumber\\
& \equiv E(R,R_d)e_\Omega,\label{eqn:eRdotE}
\end{align}
where $E(R,R_d)\in\Re^{3\times 3}$. From Rodrigues' formula, let $Q=R_d^TR=\exp\hat x\in\SO$ for $x\in\Re^3$. Using the Matlab Symbolic Computation Tool, the eigenvalues of $E(R,R_d)^TE(R,R_d)$ are given by
$\frac{1}{4},\frac{1}{4},\frac{1}{8}(1+\cos\|x\|)$. It follows that the matrix 2-norm of $E(R,R_d)$ is $\|E(R,R_d)\|=\frac{1}{2}$, which shows \refeqn{eRdot}.  

From \refeqn{Wdot}, \refeqn{Rdot}, \refeqn{Rddot}, and using the fact that $\hat\Omega_d\Omega_d=\Omega_d\times\Omega_d=0$ for any $\Omega_d\in\Re^3$, the time derivative of the angular velocity error $e_\Omega$ is given by
\begin{align*}
\dot e_\Omega & = \dot \Omega +\hat\Omega R^T R_d\Omega_d-R^T R_d\hat\Omega_d\Omega _d - R^T R_d{\dot \Omega}_d\\
& =J^{-1}(-\Omega\times J\Omega + u)+\hat\Omega R^T R_d\Omega_d- R^T R_d{\dot \Omega}_d,
\end{align*}
which shows \refeqn{eWdot}.
\end{proof}

\subsection{Attitude Tracking}

Here we define a control system to follow a given attitude command, and we show exponential stability. 

\begin{prop}
For a given attitude command $R_d(t)$, and positive constants $k_R,k_\Omega\in\Re$, we define a control input $u\in\Re^3$ as follows:
\begin{align}
u = -k_R e_R - k_\Omega e_\Omega + \Omega\times J\Omega -J (\hat\Omega R^T R_d\Omega_d- R^T R_d{\dot \Omega}_d).\label{eqn:u}
\end{align}
This control system stabilizes the zero equilibrium of the tracking error $e_R,e_\Omega$ exponentially, and an estimation of the region of attraction is given by
\begin{gather}
\Psi(R(0),R_d(0)) < 2,\label{eqn:Psi0}\\
\|e_\Omega(0)\|^2 < \frac{2}{\lambda_{\max}(J)} k_R \{2-\Psi(R(0),R_d(0))\},\label{eqn:eW0}
\end{gather}
where $\lambda_{\max}(J)$ denotes the maximum eigenvalue of the inertia matrix $J$.
\end{prop}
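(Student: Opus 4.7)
The plan is a Lyapunov analysis carried out entirely within the sublevel set $L_2$, following the standard mechanical-systems template augmented with a cross term in order to upgrade asymptotic decay to exponential. First I would substitute the control law \refeqn{u} into the angular velocity error dynamics \refeqn{eWdot}; the terms $\Omega\times J\Omega$, $\hat\Omega R^T R_d\Omega_d$, and $R^T R_d\dot\Omega_d$ in \refeqn{u} are chosen precisely to cancel the corresponding feed-forward and Coriolis contributions, leaving the clean closed-loop identity $J\dot e_\Omega = -k_R e_R - k_\Omega e_\Omega$. For the Lyapunov candidate I would use
\begin{align*}
V = \tfrac12\, e_\Omega^T J e_\Omega + k_R \Psi(R,R_d) + c\, e_R \cdot J e_\Omega,
\end{align*}
with a small constant $c>0$ to be fixed at the end. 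Using the quadratic bounds $\|e_R\|^2 \leq \Psi \leq 2\|e_R\|^2$ from \refprop{1}(iv) together with the spectral bounds on $J$, $V$ is sandwiched between two positive-definite quadratic forms in $(\|e_R\|,\|e_\Omega\|)$ whenever $c$ is small enough that the indefinite cross term is dominated.

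The central computation is $\dot V$. Using \refeqn{Psidot}, the identity \refeqn{eRdotE} with the norm bound $\|E(R,R_d)\|\leq 1/2$ from \refeqn{eRdot}, and the closed-loop identity, the $k_R\, e_R\cdot e_\Omega$ terms cancel and the derivative reduces to
\begin{align*}
\dot V = -k_\Omega \|e_\Omega\|^2 - c k_R \|e_R\|^2 - c k_\Omega\, e_R \cdot e_\Omega + c\, e_\Omega^T E^T J e_\Omega.
\end{align*}
Bounding the last term by $\tfrac{c}{2}\lambda_{\max}(J)\|e_\Omega\|^2$ and splitting the $e_R\cdot e_\Omega$ term by Young's inequality, I would conclude that for $c$ sufficiently small (relative to $k_R$, $k_\Omega$, and $\lambda_{\max}(J)$) one obtains $\dot V \leq -\alpha(\|e_R\|^2 + \|e_\Omega\|^2)$; combined with the two-sided bounds on $V$ this yields $\dot V \leq -\beta V$, hence exponential convergence of $(e_R,e_\Omega)$ to zero.

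For the region of attraction, the initial-condition inequalities stated in the proposition are tuned so that $\tfrac12 e_\Omega(0)^T J e_\Omega(0) + k_R \Psi(R(0),R_d(0)) < 2 k_R$, i.e.\ the initial ``energy'' sits strictly below the barrier value $\Psi=2$ attained at the antipodal critical points from \refprop{1}(iii). Since $\dot V \leq 0$, the trajectory is trapped in a sublevel set of $V$ contained in $L_2$, so $\Psi(R(t),R_d(t)) < 2$ for all $t\geq 0$ and the expression \refeqn{eRdotE} with the bound \refeqn{eRdot} remain valid throughout. The delicate step, and the main technical obstacle, is choosing $c$ to satisfy three competing constraints at once: positive-definiteness of $V$, sign-definiteness of $\dot V$, and invariance of the relevant sublevel set inside $L_2$. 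The third of these forces the region-of-attraction conditions to be expressed in terms of the augmented $V(0)$ rather than the uncrossed energy, which ultimately accounts for the specific weighting $k_R/\lambda_{\max}(J)$ of the gap $2-\Psi(R(0),R_d(0))$ appearing in the admissible bound on $\|e_\Omega(0)\|$.
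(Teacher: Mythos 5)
Your overall route is the same as the paper's: cancel the Coriolis and feed-forward terms to get $J\dot e_\Omega = -k_R e_R - k_\Omega e_\Omega$, add a small cross term to the energy $\tfrac12 e_\Omega\cdot Je_\Omega + k_R\Psi$, sandwich the result between quadratic forms in $(\|e_R\|,\|e_\Omega\|)$ using \refeqn{eRPsi}, and bound $\dot{\mathcal{V}}$ by a negative-definite quadratic form using \refeqn{Psidot}, \refeqn{eRdotE} and $\|E\|\le\tfrac12$. Your choice of cross term $c\, e_R\cdot Je_\Omega$ (rather than the paper's $c_2\, e_\Omega\cdot e_R$) is a harmless and arguably cleaner variant: it lets you use $J\dot e_\Omega$ directly and produces the exact term $-ck_R\|e_R\|^2$ instead of $-\tfrac{c_2 k_R}{\lambda_{\max}(J)}\|e_R\|^2$, at the cost of a $\lambda_{\max}(J)$ appearing in the off-diagonal entries of the sandwich matrices. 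The $\dot V$ computation you sketch is correct.

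The genuine gap is in the invariance/region-of-attraction step. You argue that ``since $\dot V\le 0$, the trajectory is trapped in a sublevel set of $V$ contained in $L_2$,'' with $V$ the cross-term-augmented function, and you then assert that the stated admissible set is ``expressed in terms of the augmented $V(0)$ rather than the uncrossed energy.'' Both claims fail. First, sublevel sets of the augmented $V$ are \emph{not} contained in $L_2$ under the stated hypotheses: the cross term $c\,e_R\cdot Je_\Omega$ can be negative, so $V<2k_R$ does not imply $k_R\Psi<2k_R$; making that containment work would force $c$ into the region-of-attraction estimate, whereas \refeqn{Psi0}--\refeqn{eW0} do not involve $c$. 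Second, there is a circularity: the inequality $\|E(R,R_d)\|\le\tfrac12$ and indeed the definition of $e_R$ itself are only valid in $L_2$, so you cannot invoke $\dot V\le 0$ along the whole trajectory before you know the trajectory stays in $L_2$. The correct (and the paper's) resolution is a two-stage argument: first take the \emph{uncrossed} energy $\mathcal{W}=\tfrac12 e_\Omega\cdot Je_\Omega+k_R\Psi$, for which the closed loop gives $\dot{\mathcal{W}}=-k_\Omega\|e_\Omega\|^2\le 0$ with no need for the bound on $E$; conditions \refeqn{Psi0}--\refeqn{eW0} are exactly the statement $\mathcal{W}(0)\le\tfrac12\lambda_{\max}(J)\|e_\Omega(0)\|^2+k_R\Psi(0)<2k_R$, whence $k_R\Psi(t)\le\mathcal{W}(t)\le\mathcal{W}(0)<2k_R$ and $L_2$ is positively invariant. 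Only after this is established may you introduce the cross term and run your exponential-rate estimate. With that reordering your proof is complete.
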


\begin{proof}
We first show that the sublevel set $L_2=\{R\in\SO\,|\,\Psi(R,R_d)<2\}$ is a positively invariant set. Consider the following Lyapunov function
\begin{align*}
\mathcal{W} = \frac{1}{2} e_\Omega \cdot J e_\Omega + k_R \Psi(R,R_d).
\end{align*}
According to \refeqn{eRPsi}, this is locally positive definite. Substituting \refeqn{u} into \refeqn{eWdot}, we obtain
\begin{align}
J\dot e_\Omega = -k_R e_R -k_\Omega e_\Omega.\label{eqn:JeWdot}
\end{align}
Using \refeqn{Psidot}, \refeqn{JeWdot}, the time-derivative of $\mathcal{W}$ is given by
\begin{align*}
\dot{\mathcal{W}} & = e_\Omega\cdot(-k_R e_R -k_\Omega e_\Omega) + k_R e_R\cdot e_\Omega \\
& = - k_R \|e_\Omega\|^2 \leq 0. 
\end{align*}
This implies that $\mathcal{W}(t)\leq \mathcal{W}(0)$ for all $t>0$, and from \refeqn{eW0}, we have
\begin{align*}
\mathcal{W}(0) \leq \frac{1}{2}\lambda_{\max}(J)\|e_\Omega(0)\|^2 + k_R\Psi(R(0),R_d(0)) < 2k_R.
\end{align*}
Therefore, we obtain
\begin{align*}
k_R\Psi(R(t),R_d(t))\leq \mathcal{W}(t)\leq \mathcal{W}(0) < 2k_R
\end{align*}
for all $t>0$. This follows that $\Psi(R(t),R_d(t))<2$ always. So, the sublevel set $L_2=\{R\in\SO\,|\,\Psi(R,R_d)<2\}$ is a positively invariant set under the given assumptions. Then, according to \refprop{1}, the attitude error vector $e_R$ is well defined, and there exists only one critical point of $\Psi$ in $L_2$.

To show exponential stability, we consider the following Lyapunov function:
\begin{align*}
\mathcal{V} = \frac{1}{2} e_\Omega \cdot J e_\Omega + k_R \Psi(R,R_d) + c_2 e_\Omega\cdot e_R
\end{align*}
for a positive constant $c_2$. From \refeqn{eRPsi}, we obtain
\begin{align}
z^T W_{11} z \leq \mathcal{V} \leq z^T W_{12} z,\label{eqn:Vb}
\end{align}
where $z = [\|e_R\|;\,\|e_\Omega\|]\in\Re^2$, and the matrices $W_{11},W_{12}\in\Re^{2\times 2}$ are given by
\begin{align*}
W_{11} = \begin{bmatrix}  k_R & \frac{1}{2}c_2\\
\frac{1}{2}c_2 & \frac{1}{2}\lambda_{\min}(J)
\end{bmatrix},\quad
W_{12} = \begin{bmatrix}  2k_R & \frac{1}{2}c_2\\
\frac{1}{2}c_2 & \frac{1}{2}\lambda_{\max}(J)
\end{bmatrix}.
\end{align*}

From \refeqn{JeWdot}, \refeqn{Psidot}, \refeqn{eRdotE}, the time derivative of the Lyapunov function $\mathcal{V}$ along the solution of the controlled system is given by
\begin{align*}
\dot{\mathcal{V}} 
& = e_\Omega\cdot J\dot e_\Omega + k_R \dot\Psi + c_2 \dot e_\Omega\cdot e_R + c_2 e_\Omega \cdot \dot e_R\\
& = e_\Omega \cdot ( -k_\Omega e_\Omega - k_R e_R) + k_R e_R\cdot e_\Omega\\
&\quad  + c_2 (J^{-1}( - k_\Omega e_\Omega - k_R e_R)) \cdot e_R + c_2 e_\Omega \cdot E_R(R,R_d)e_\Omega.
\end{align*}
Using \refeqn{eRdot}, this is bounded by
\begin{align}
\dot{\mathcal{V}} 
& \leq - \parenth{k_\Omega - \frac{c_2}{2}}\|e_\Omega\|^2
- \frac{c_2 k_R}{\lambda_{\max}(J)} \|e_R\|^2\nonumber\\
&\quad+ \frac{c_2 k_\Omega}{\lambda_{\min}(J)} \|e_R\|\|e_\Omega\|\nonumber\\
& = -z^T W_2 z,\label{eqn:Vdot}
\end{align}
where the matrix $W_2\in\Re^{2\times 2}$ is given by
\begin{align}
W_2 = \begin{bmatrix}  \frac{c_2 k_R}{\lambda_{\max}(J)} & -\frac{c_2 k_\Omega}{2\lambda_{\min}(J)}\\
- \frac{c_2 k_\Omega}{2\lambda_{\min}(J)} & k_\Omega - \frac{c_2}{2}\end{bmatrix}.\label{eqn:W2}
\end{align}
We choose the positive constant $c_2$ such that
\begin{align*}
c_2 < \min\braces{\sqrt{2k_R\lambda_{\min}(J)},\, 2k_\Omega,\,
\frac{4 k_R k_\Omega \lambda_{\min}^2(J)}{2 k_R \lambda_{\min}^2(J)+ k_\Omega^2\lambda_{\max}(J) }}.
\end{align*}
Then, the matrices $W_{11},W_{12},W_{2}$ become positive definite, which implies that $\mathcal{V}$ is quadratic, and 
\begin{align}
\mathcal{V}(t)\leq \mathcal{V}(0) \exp \parenth{-\frac{\lambda_{\min}(W_{2})}{\lambda_{\max}(W_{12})}t}.\label{eqn:Vt}
\end{align}
Therefore, the zero equilibrium of the attitude and the angular velocity tracking error $(e_R,e_\Omega)$ is exponentially stable. 
\end{proof}

In this paper, we claim that this control system stabilizes the zero equilibrium of the attitude and angular velocity tracking errors \textit{almost semi-globally} in the sense that the region of attraction given by \refeqn{Psi0}, \refeqn{eW0} satisfies the following properties: the initial attitude region given by \refeqn{Psi0} almost cover $\SO$, since it only excludes the two-dimensional subset $\{R_d(0)\exp(\pm\pi\hat s),\; s\in\Sph^2\}$ from the three-dimensional $\SO$; the initial angular velocity could be arbitrarily large by choosing a sufficiently larger gain $k_R$ in \refeqn{eW0}. 

\subsection{Attitude Stabilization Without Inertia Matrix} 

The proposed attitude tracking controller requires the exact value of the inertia matrix. Here, we show that in a special case, where the desired attitude is fixed, i.e. $\Omega_d(t)\equiv 0$, we can stabilize the attitude error without the knowledge of the inertia matrix. 

\begin{prop}
Suppose that the desired attitude $R_d$ is fixed so that $\Omega_d(t)\equiv 0$ for any $t>0$. For positive constant $k_R,k_\Omega\in\Re$, we define a control input $u'\in\Re^3$ as follows:
\begin{align}
u'= -k_R e_R -k_\Omega e_\Omega.\label{eqn:up}
\end{align}
This control system stabilizes the zero equilibrium of the errors $e_R,e_\Omega$ exponentially, and an estimation of the region of attraction is given by
\begin{gather}
\Psi(R(0),R_d(0)) < 2,\label{eqn:Psi0}\\
\|e_\Omega(0)\|^2 < \frac{2}{\lambda_{\max}(J)} k_R \{2-\Psi(R(0),R_d(0))\},\label{eqn:eW0}
\end{gather}
where $\lambda_{\max}(J)$ denotes the maximum eigenvalue of the inertia matrix $J$.
\end{prop}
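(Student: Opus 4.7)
The plan is to mimic the exponential-stability analysis of the preceding tracking proposition, exploiting the fact that since $\Omega_d\equiv 0$ we have $e_\Omega=\Omega$, so the gyroscopic term $\Omega\times J\Omega$ is perpendicular to $e_\Omega$. This orthogonality is precisely what will allow the unknown inertia matrix to drop out of the dominant Lyapunov balance even though the control law \refeqn{up} does not cancel it.

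First I would establish positive invariance of $L_2$. Substituting \refeqn{up} into \refeqn{eWdot} (with $\Omega_d=\dot\Omega_d=0$) yields $J\dot e_\Omega = -\Omega\times J\Omega - k_R e_R - k_\Omega e_\Omega$. With the same Lyapunov candidate as before, $\mathcal{W} = \tfrac{1}{2}\,e_\Omega\cdot Je_\Omega + k_R\Psi(R,R_d)$, the gyroscopic term annihilates because $e_\Omega\cdot(\Omega\times J\Omega)=0$, and \refeqn{Psidot} then gives $\dot{\mathcal{W}} = -k_\Omega\|e_\Omega\|^2\leq 0$, exactly as in the tracking case. The bound \refeqn{eW0} forces $\mathcal{W}(0)<2k_R$, hence $\Psi(R(t),R_d(t))<2$ for all $t$, so the trajectory remains in $L_2$ and $e_R$ is well defined throughout by \refprop{1}.

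For the exponential rate I would reuse the augmented Lyapunov function $\mathcal{V} = \mathcal{W} + c_2\,e_\Omega\cdot e_R$, inheriting the two-sided quadratic sandwich \refeqn{Vb} in $z=[\|e_R\|;\|e_\Omega\|]$. Differentiating along the closed loop reproduces the terms from the tracking proof, plus one genuinely new contribution to $c_2\,\dot e_\Omega\cdot e_R$, namely $-c_2\,J^{-1}(\Omega\times J\Omega)\cdot e_R$, which is no longer cancelled by feedback. I would bound it by
\begin{align*}
\bigl|c_2\,J^{-1}(\Omega\times J\Omega)\cdot e_R\bigr| \;\leq\; c_2\,\frac{\lambda_{\max}(J)}{\lambda_{\min}(J)}\,\|e_\Omega\|^2\,\|e_R\|,
\end{align*}
and then invoke the invariance of $L_2$ together with \refprop{1}(iv), which give the uniform bound $\|e_R\|\leq\sqrt{2}$. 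The estimate $\dot{\mathcal{V}}\leq -z^T W_2' z$ then holds with $W_2'$ identical to \refeqn{W2} except that its $(2,2)$ entry is replaced by $k_\Omega-\tfrac{c_2}{2}-\sqrt{2}\,c_2\,\lambda_{\max}(J)/\lambda_{\min}(J)$. Imposing one additional smallness condition on $c_2$ keeps $W_{11},W_{12},W_2'$ positive definite, and the standard comparison argument then yields $\mathcal{V}(t)\leq\mathcal{V}(0)\exp(-\lambda_{\min}(W_2')t/\lambda_{\max}(W_{12}))$, and thus exponential decay of $(e_R,e_\Omega)$.

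The main obstacle is precisely this last step: the uncompensated gyroscopic term is cubic in the errors, so a purely infinitesimal quadratic Lyapunov analysis cannot absorb it. The resolution is that the a priori invariance of $L_2$ linearises the cubic term back to a quadratic one by bounding $\|e_R\|$ uniformly, which is why the region of attraction is unchanged from the tracking case and remains almost semi-global.
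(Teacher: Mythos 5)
Your proposal is correct and follows essentially the same route as the paper's own proof: positive invariance of $L_2$ via the basic Lyapunov function (using $e_\Omega=\Omega$ so the gyroscopic term is annihilated), then the cross-term Lyapunov function with the a priori bound $\|e_R\|\leq\sqrt 2$ to reduce the uncancelled cubic term to a quadratic one, yielding the same modified $(2,2)$ entry $k_\Omega-\bigl(\tfrac12+\sqrt2\,\lambda_{\max}(J)/\lambda_{\min}(J)\bigr)c_2$. You even correct a small typo in the paper, which writes $\dot{\mathcal W}=-k_R\|e_\Omega\|^2$ where the coefficient should be $k_\Omega$.
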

\begin{proof}
Similar to the proof of \refprop{2}, we first show that the sublevel set $L_2=\{R\in\SO\,|\,\Psi(R,R_d)<2\}$ is a positively invariant set. Consider the following Lyapunov function
\begin{align*}
\mathcal{W}' = \frac{1}{2} e_\Omega \cdot J e_\Omega + k_R \Psi(R,R_d).
\end{align*}
According to \refeqn{eRPsi}, this is locally positive definite. Substituting \refeqn{up} into \refeqn{eWdot}, we obtain
\begin{align}
J\dot e_\Omega = -k_R e_R -k_\Omega e_\Omega-\Omega\times J\Omega.\label{eqn:JeWdotp}
\end{align}
Using \refeqn{Psidot}, \refeqn{JeWdotp}, the time-derivative of $\mathcal{W}'$ is given by
\begin{align*}
\dot{\mathcal{W}} & = e_\Omega\cdot(-k_R e_R -k_\Omega e_\Omega-\Omega\times J\Omega) + k_R e_R\cdot e_\Omega.
\end{align*}
According to \refeqn{eW}, we have $e_\Omega=\Omega$ when $\Omega_d=0$. Thus, $e_\Omega\cdot(\Omega\times J\Omega) = e_\Omega\cdot(e_\Omega\times Je_\Omega) =0$. Then, this reduces to 
\begin{align*}
\dot{\mathcal{W}} & = - k_R \|e_\Omega\|^2 \leq 0. 
\end{align*}
Similar to the proof of \refprop{1}, this implies that the sublevel set $L_2=\{R\in\SO\,|\,\Psi(R,R_d)<2\}$ is a positively invariant set under the given assumptions.

To show exponential stability, we consider the following Lyapunov function:
\begin{align*}
\mathcal{V}' = \frac{1}{2} e_\Omega \cdot J e_\Omega + k_R \Psi(R,R_d) + c_2 e_\Omega\cdot e_R
\end{align*}
for a positive constant $c_2$. This satisfies \refeqn{Vb}. 

From \refeqn{JeWdotp}, \refeqn{Psidot}, \refeqn{eRdotE}, the time derivative of the Lyapunov function $\mathcal{V}]$ along the solution of the controlled system is given by
\begin{align*}
\dot{\mathcal{V}}' 
& = e_\Omega \cdot ( -k_\Omega e_\Omega - k_R e_R-\Omega\times J\Omega) + k_R e_R\cdot e_\Omega\\
&\quad  + c_2 (J^{-1}( - k_\Omega e_\Omega - k_R e_R-\Omega\times J\Omega)) \cdot e_R\\
&\quad + c_2 e_\Omega \cdot E_R(R,R_d)e_\Omega.
\end{align*}
Since $\Psi(R(t),R_d)< 2$, we have $\|e_R\|< \sqrt{2}$ from \refeqn{eRPsi}. We also have $e_\Omega=\Omega$ since $\Omega_d=0$. Then, the following inequality is satisfied:
\begin{align*}
\|c_2 (J^{-1}(\Omega\times J\Omega))\cdot e_R\| \leq \sqrt{2}c_2 \frac{\lambda_{\max}(J)}{\lambda_{\min}(J)} \|e_\Omega\|^2.
\end{align*}
Then, similar to \refeqn{Vdot}, we obtain
\begin{align*}
\dot{\mathcal{V}}' & \leq -z^T W'_2 z,
\end{align*}
where the matrix $W'_2\in\Re^{2\times 2}$ is given by
\begin{align}
W'_2 = \begin{bmatrix}  \frac{c_2 k_R}{\lambda_{\max}(J)} & -\frac{c_2 k_\Omega}{2\lambda_{\min}(J)}\\
- \frac{c_2 k_\Omega}{2\lambda_{\min}(J)} & k_\Omega - \alpha c_2\end{bmatrix},\label{eqn:W2p}
\end{align}
where $\alpha=\frac{1}{2}+\sqrt{2}\frac{\lambda_{\max}(J)}{\lambda_{\min}(J)}$.

We choose the positive constant $c_2$ such that
\begin{align*}
c_2 < \min\braces{\sqrt{2k_R\lambda_{\min}(J)},\, \frac{k_\Omega}{\alpha},\,
\frac{4 k_R k_\Omega \lambda_{\min}^2(J)}{4\alpha k_R \lambda_{\min}^2(J)+ k_\Omega^2\lambda_{\max}(J) }}.
\end{align*}
Then, the matrices $W_{11},W_{12},W'_{2}$ become positive definite, which implies the zero equilibrium of the attitude and the angular velocity error $(e_R,e_\Omega)$ is exponentially stable. 
\end{proof}

This control system allows us to stabilize a fixed attitude without the knowledge of the inertia matrix, since the control input \refeqn{up} is independent of $J$. But, this reduces the convergence rate. As discussed in \refeqn{Vt}, the convergence rate of the controlled system depends on the eigenvalue of the matrix $W_2$. Comparing \refeqn{W2p} with \refeqn{W2}, we expect that the eigenvalues of $W'_2$ are less than the eigenvalues of $W_2$ since $\alpha>\frac{1}{2}$. 

\subsection{Properties}

One of the unique properties of the presented controller is that it is directly developed on $\SO$ using rotation matrices. Therefore, it avoids the complexities and singularities associated with local coordinates of $\SO$, such as Euler angles. It also avoids the ambiguities that arise when using quaternions to represent the attitude dynamics. As the three-sphere $\Sph^3$ double covers $\SO$, any attitude feedback controller designed in terms of quaternions could yield different control inputs depending on the choice of quaternion vectors. The corresponding stability analysis would need to carefully consider the fact that convergence to a single attitude implies convergence to either of the two disconnected, antipodal points on $\Sph^3$~\cite{MaySanPICDC09}. This requires a continuous selection of the sign of quaternions or a discontinuous control system, which are shown to be sensitive to small measurement noise~\cite{SanMesPACC06}. Without these considerations, a quaternion-based controller can exhibit an unwinding phenomenon, where the controller unnecessarily rotates the attitude through large angles~\cite{BhaBerSCL00}. In this paper, the use of rotation matrices in the controller design and stability analysis completely eliminates these difficulties.

Another novelty of the presented controller is the choice of the attitude error function in \refeqn{Psi}. It is carefully designed to guarantee a good tracking performance for large attitude error. In contrast to other attitude control systems on $\SO$ constructed by \refeqn{Psi0}~\cite{BulLew05,ChaMcCITAC09}, the magnitude of the attitude error vector is proportional to the value of the attitude error function such that the corresponding control system is uniformly effective for larger attitude errors. These are illustrated by numerical examples in the next section.

\section{Numerical Examples}

\begin{figure}
\centerline{
	\subfigure[Attitude error function $\Psi$]{
		\includegraphics[width=0.5\columnwidth]{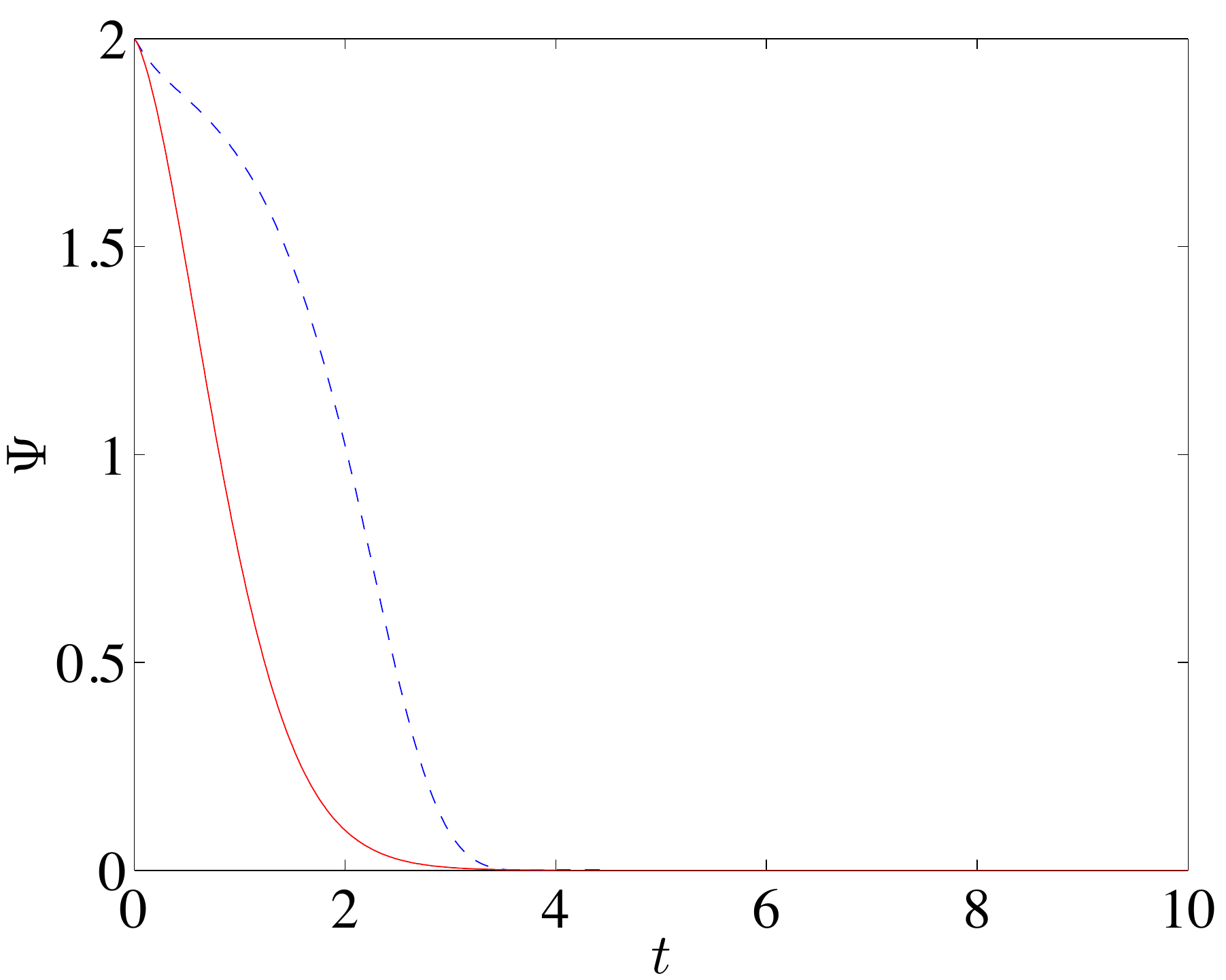}}
	\subfigure[Magnitude of angular velocity error $\|e_\Omega\|$ ($\mathrm{rad/sec}$)]{
		\includegraphics[width=0.5\columnwidth]{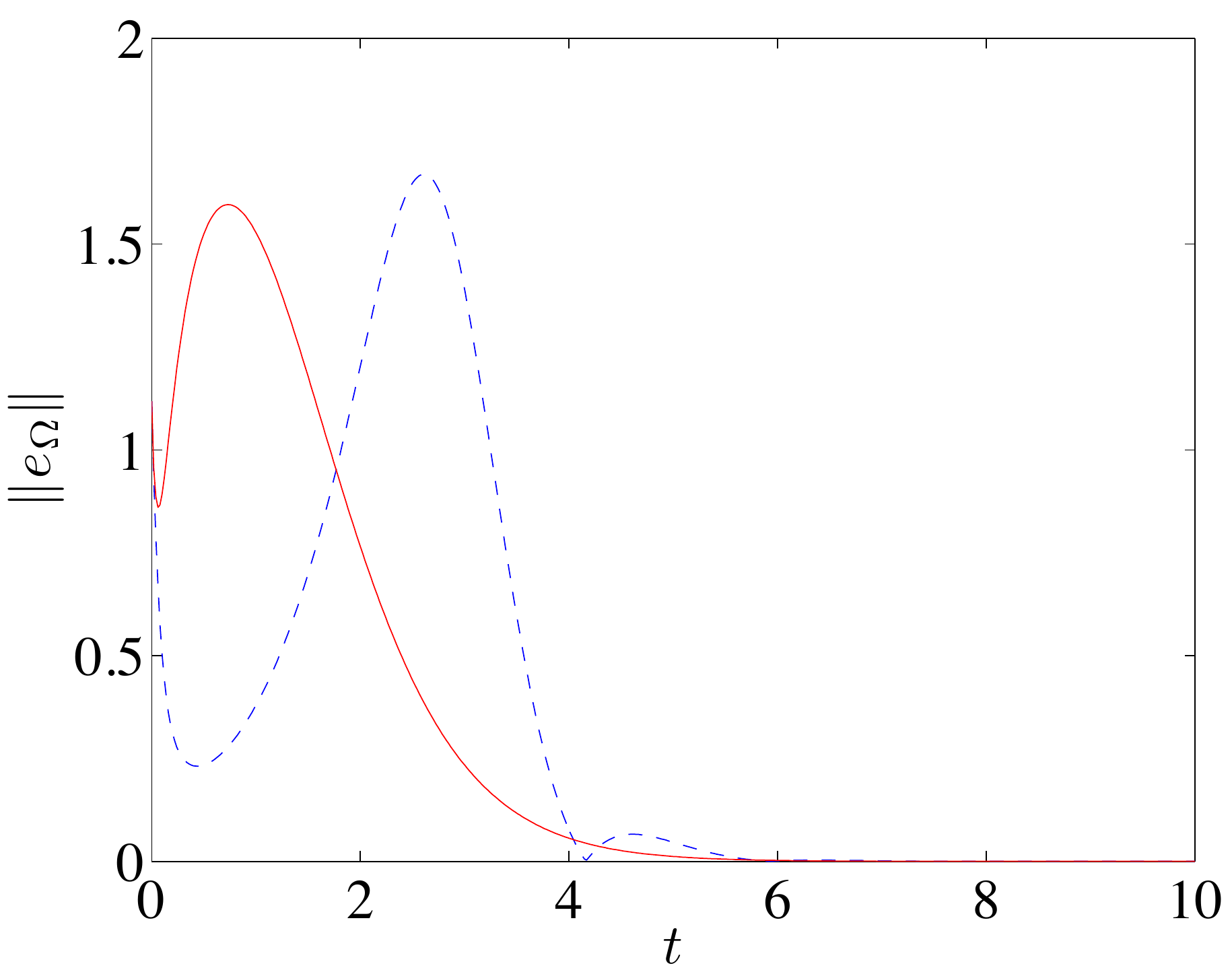}}
}
\centerline{
	\subfigure[Angular velocity $\Omega$ ($\mathrm{rad/sec}$)]{
		\includegraphics[width=0.5\columnwidth]{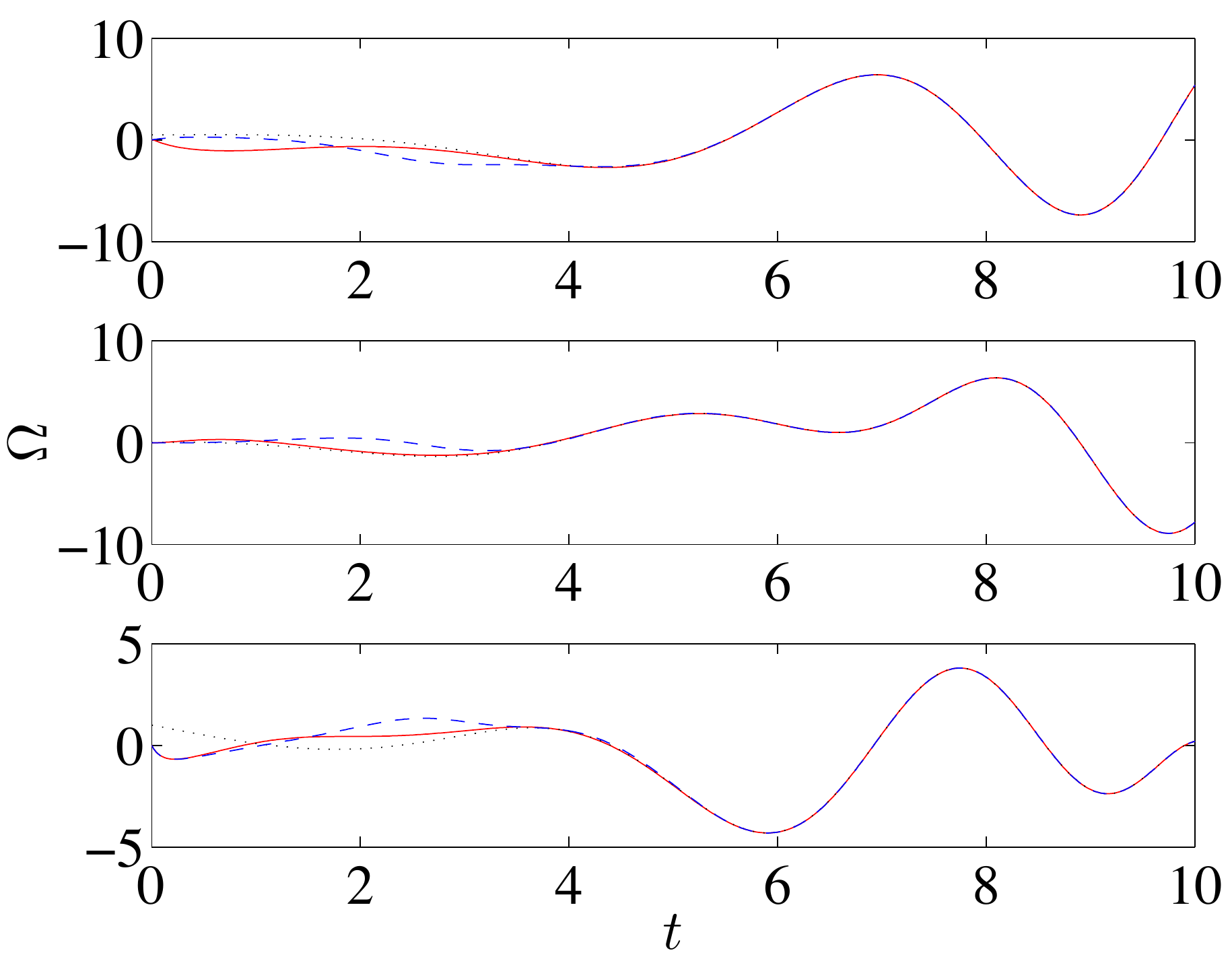}}
	\subfigure[Control input $u$ ($\mathrm{Nm}$)]{
		\includegraphics[width=0.5\columnwidth]{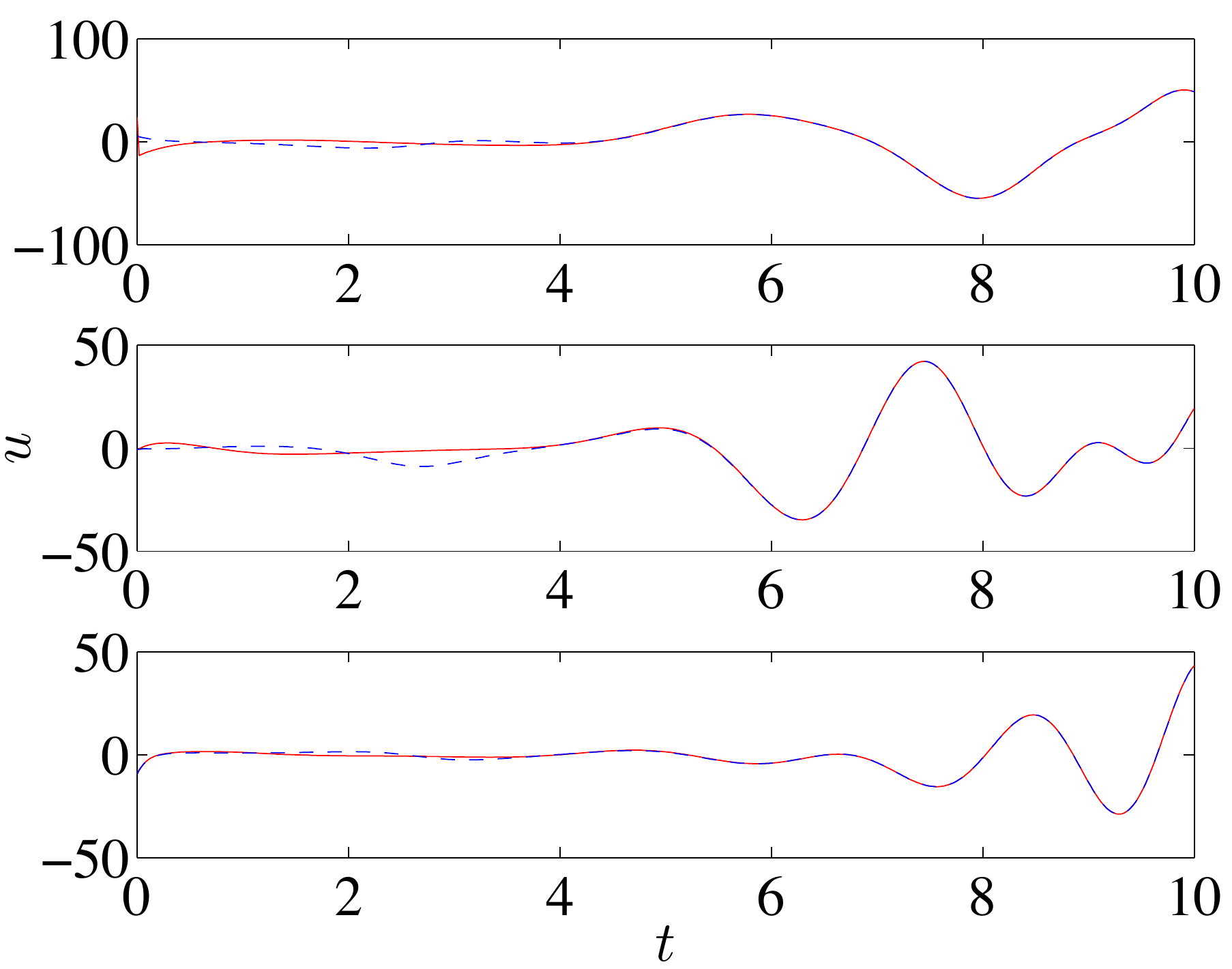}}
}
\caption{Attitude tracking (proposed controller: red,solid,\quad control system constructed by \refeqn{Psi0}: blue,dashed,\quad command:black,dotted)}\label{fig:1}
\end{figure}

We choose the inertia matrix of a rigid body and initial conditions as follows:
\begin{gather*}
J=\mathrm{diag}[3,\,2,\,1]\,\mathrm{kgm^2}\\
R(0)=I,\quad \Omega(0)=[0,\, 0,\, 0]\,\mathrm{rad/sec}
\end{gather*}
We consider two cases:
\begin{itemize}
\item[(i)] Attitude tracking with the full knowledge of $J$. The desired attitude command is described by using 3-2-1 Euler angles ~\cite{ShuJAS93}, i.e. $R_d(t)=R_d(\phi(t),\theta(t),\psi(t))$, and these angles are chosen as
\begin{align*}\hspace*{-0.5cm}
\phi(t) & =  0.999\pi+0.5t,\;\theta(t)= 0.1t^2,\; \psi(t)=-0.2t+0.5t^2,
\end{align*}
where the unit of these angles is radian, when the simulation time $t$ is in seconds. The corresponding angular velocity command $\Omega(t)$, and its time-derivative $\dot\Omega(t)$ are obtained from the attitude kinematics equation \refeqn{Rddot}.
\item[(ii)] Attitude stabilization without the knowledge of $J$. The desired attitude is chosen as
\begin{align*}
R_d = \exp (0.999\pi \hat s),\quad \text{where $s=\frac{1}{\sqrt{3}}[1,-1,1]$}.
\end{align*}
Since $R_d$ is fixed, we have $\Omega_d(t)=\dot\Omega_d(t)=0$. 
\end{itemize}
We use the control system \refeqn{u} for the first case, and we use the control system \refeqn{up} for the second case. For both cases, the controller gains are chosen as
\begin{align*}
k_R = 12,\quad k_\Omega = 8.4.
\end{align*}
Note that the desired attitude command of the first case represents a nontrivial rotational maneuver, and the initial attitude error of both cases is $0.999\pi=179.82^\circ$ in terms of the rotation angle about the Euler axis between $R(0)$ and $R_d(0)$.

It has been shown that general-purpose numerical integrators fail to preserve the structure of the special orthogonal group $\SO$, and they may yields unreliable computational results for complex maneuvers of rigid bodies~\cite{IseMunAN00,HaiLub00}. In this paper, we use a geometric numerical integrators, referred to as a Lie group variational integrator, to preserve the underlying geometric structures of the attitude dynamics accurately~\cite{LeeLeoCMDA07}.

Simulation results are represented in the following figures, where the responses of the proposed control system (red, solid lines) are compared with a control system based on \refeqn{Psi0} in~\cite{BulLew05,ChaMcCITAC09} (blue, dashed lines). At \reffig{Psi}, we showed that the control system based on \refeqn{Psi0} yields a small control input when the initial attitude error is close to $180^\circ$. These are observed again in the subfigure (d) for both cases. As a result, the initial convergence rates of the attitude error and the angular velocity error are relatively poor in the subfigures (a) and (b): it takes a longer time to converge in blue, dashed lines. But, the proposed control system exhibits more desirable convergence properties for a given complex rotational maneuvers involving large initial attitude errors. 

\begin{figure}
\centerline{
	\subfigure[Attitude error function $\Psi$]{
		\includegraphics[width=0.5\columnwidth]{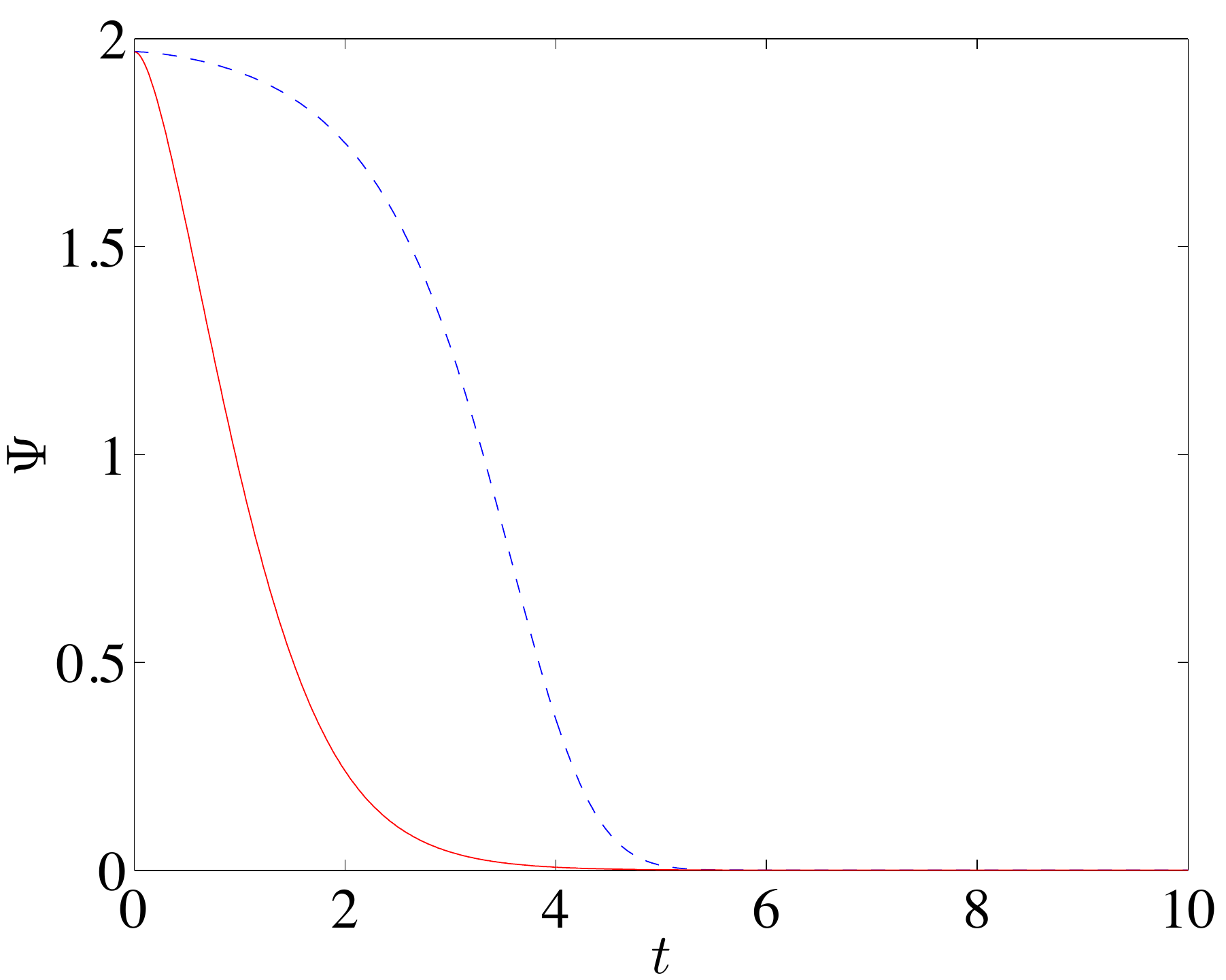}}
	\subfigure[Magnitude of angular velocity error $\|e_\Omega\|$ ($\mathrm{rad/sec}$)]{
		\includegraphics[width=0.5\columnwidth]{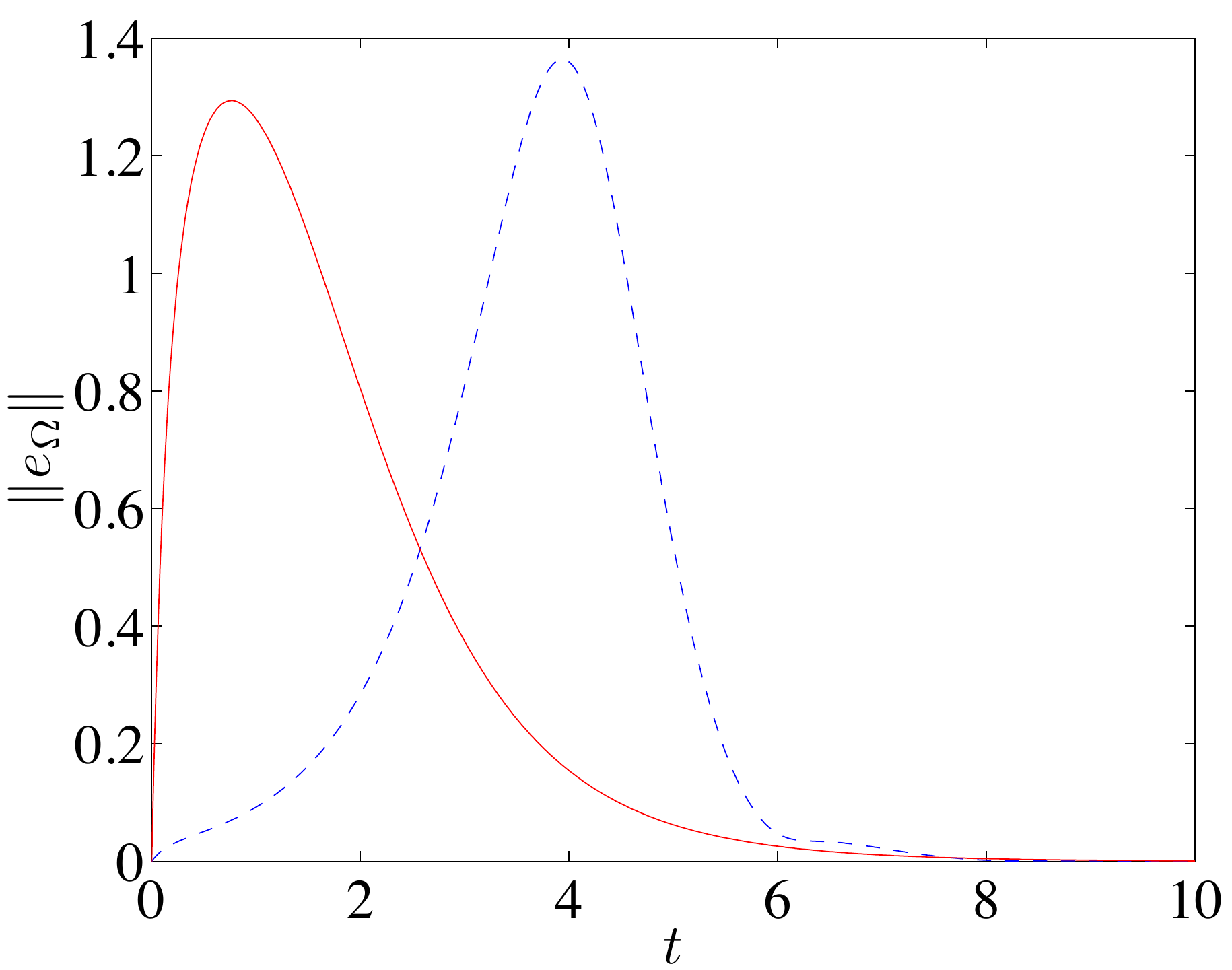}}
}
\centerline{
	\subfigure[Angular velocity $\Omega$ ($\mathrm{rad/sec}$)]{
		\includegraphics[width=0.5\columnwidth]{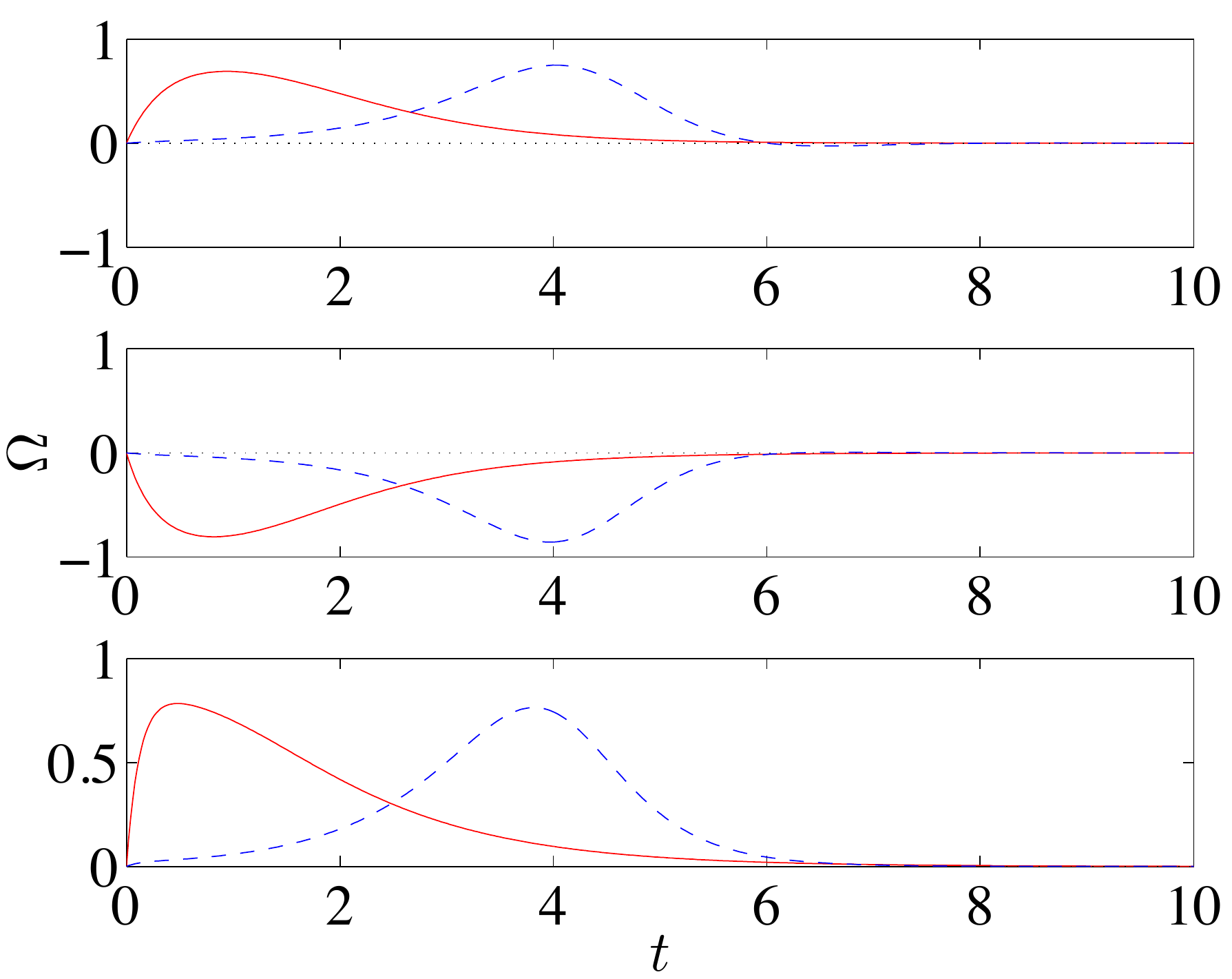}}
	\subfigure[Control input $u$ ($\mathrm{Nm}$)]{
		\includegraphics[width=0.5\columnwidth]{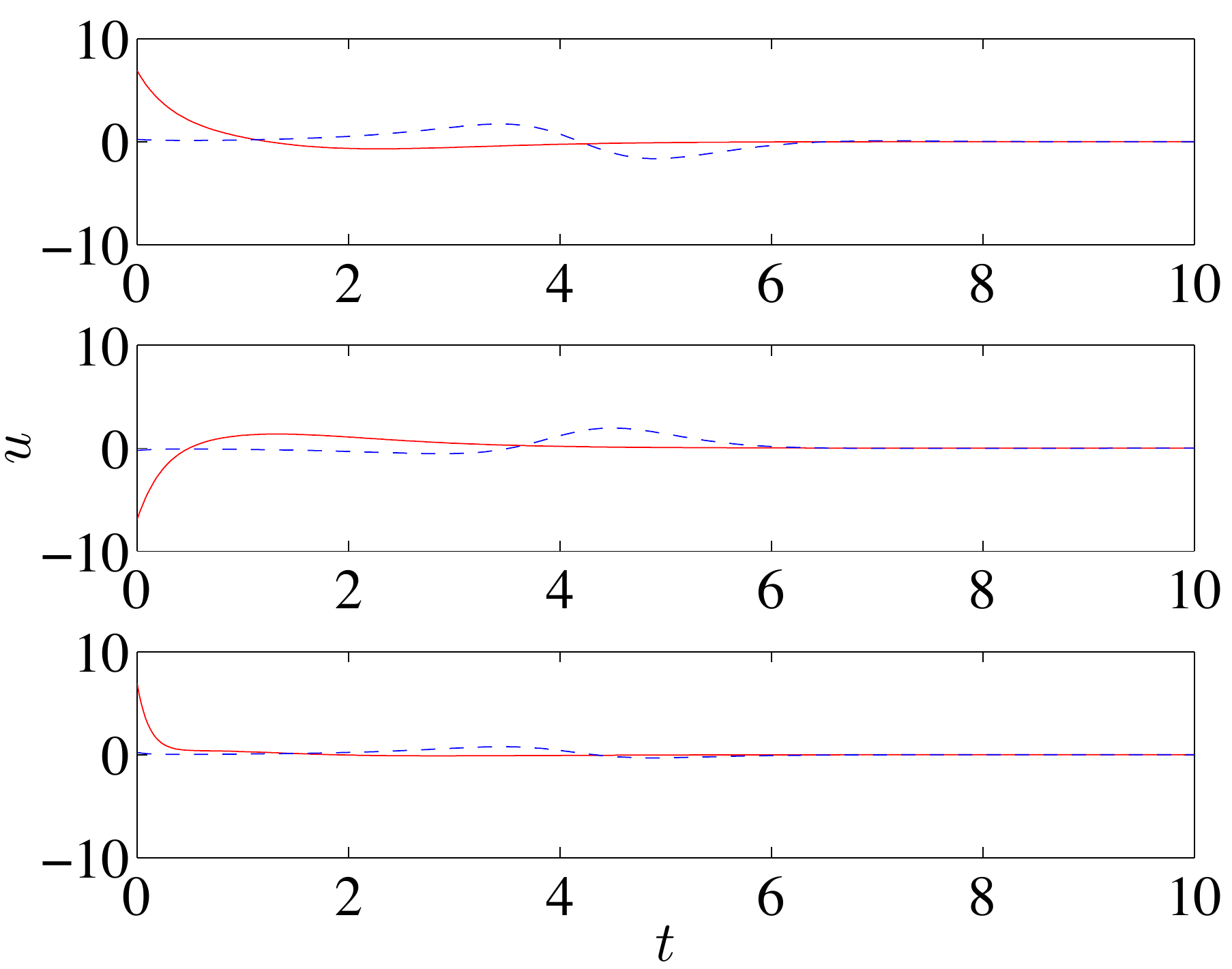}}
}
\caption{Attitude stabilizing without the knowledge of the inertia matrix. (proposed controller: red,solid,\quad control system constructed by \refeqn{Psi0}: blue,dashed,\quad command:black,dotted)}
\label{fig:2}
\end{figure}

\section{Conclusion}

We have developed a geometric tracking control system on $\SO$. The proposed control system is constructed directly on $\SO$ to avoid singularities and ambiguities that are inherent to other attitude representations, and its tracking performance is guaranteed uniformly in large initial attitude errors. We also show that in a special case where the desired attitude command is fixed, the proposed control system does not require the full knowledge of an inertia matrix.

\bibliography{ACC11.2}
\bibliographystyle{IEEEtran}

\end{document}